\DeclareMathAlphabet{\mathitbf}{OML}{cmm}{b}{it}
\newcommand{\emptyword}{\ensuremath{\epsilon}}
\newcommand{\B}{\mathcal{B}} 
\newcommand{\barB}{\bar{\B}} 
\theoremstyle{plain}
\newtheorem{theorem}{Theorem}
\newtheorem*{theorem*}{Theorem}
\newtheorem*{corollary*}{Corollary}
\newtheorem{lemma}[theorem]{Lemma}
\newtheorem*{lemma*}{Lemma}
\newtheorem*{proposition*}{Proposition}
\newtheorem*{conjecture*}{Conjecture}
\theoremstyle{definition}
\newtheorem*{definition*}{Definition}
\newtheorem*{example*}{Example}
\newtheorem*{problem*}{Problem}
\theoremstyle{remark}
\newtheorem*{remark*}{Remark}
\newcommand{\lra}{\longrightarrow}
\DeclareMathOperator{\take}{\mathrm{take}}
\DeclareMathOperator{\leaves}{\mathrm{leaves}}
\DeclareMathOperator{\internal}{\mathrm{int}}
\DeclareMathOperator{\sub}{\mathrm{sub}}
\renewcommand{\root}{\operatorname{root}}
\DeclareMathOperator{\rpath}{\mathrm{rpath}}
\DeclareMathOperator{\rsub}{\mathrm{rsub}}
\DeclareMathOperator{\NODES}{\mathtt{v}}
\DeclareMathOperator{\ROOT}{\mathtt{root}}
\DeclareMathOperator{\RPATH}{\mathtt{rpath}}
\DeclareMathOperator{\RSUB}{\mathtt{rsub}}
\DeclareMathOperator{\SUB}{\mathtt{sub}}
\DeclareMathOperator{\LEAVES}{\mathtt{leaves}}
\DeclareMathOperator{\INTERNAL}{\mathtt{int}}
\newcommand{\btree}{\mbox{\ensuremath{\beta(1,0)}-tree}}
\newcommand{\btrees}{\btree s}
\newcommand{\cfilll}{white}
\newcommand{\ns}{4pt}
\newcommand{\nodestyle}{\tikzstyle{every node} = [font=\footnotesize]}
\newcommand{\discstyle}{\tikzstyle{disc} = 
  [ circle,thin,fill=\cfilll,draw=black, minimum size=\ns, inner sep=0pt ] }
\newcommand{\style}{
  \nodestyle
  \discstyle
}
\newcommand{\scl}{0.45}
\newcommand{\leaf}{
  \begin{tikzpicture}[ scale=\scl, baseline=-2.5pt ]
    \style
    \node [disc] (r) at (0,0) {};
  \end{tikzpicture}
}
\newcommand{\edge}{  
  \begin{tikzpicture}[ xscale=0.4, yscale=0.3, baseline=1.9pt ]
    \style
    \node [disc] (r) at (0,1) {};
    \node [disc] (1) at (0,0) {};
    \draw (r)  -- (1);
  \end{tikzpicture}
}
\newcommand{\bx}{
  \begin{tikzpicture}[ scale=\scl, baseline=-2.5pt ]
    \style
    \node [disc] (r) at (0,0) {};
    \draw (r) node[right=1pt]  {\ensuremath{1}};
  \end{tikzpicture}
}
\newcommand{\bbx}{  
  \begin{tikzpicture}[ scale=\scl, baseline=-2pt ]
    \style
    \node [disc] (r) at (0,1) {};
    \node [disc] (1) at (0,0) {};
    \draw (r) node[above=1pt] 
          {\ensuremath{1}} -- (1) node[below=1pt] {\ensuremath{1}};
  \end{tikzpicture}
}
\newcommand{\bbbx}{  
  \begin{tikzpicture}[ scale=\scl, baseline=(r11.base) ]
    \style
    \node [disc] (r)   at (0, 0) {};
    \node [disc] (r1)  at (0,-1) {};
    \node [disc] (r11) at (0,-2) {};
    \draw 
      (r)   node[left] {1} -- 
      (r1)  node[left] {1} -- 
      (r11) node[left] {1};
  \end{tikzpicture}
}
\newcommand{\bbbxx}{  
  \begin{tikzpicture}[ scale=\scl, baseline=(r2.base) ]
    \style
    \node [disc] (r)  at (   0, 0) {};
    \node [disc] (r1) at (-0.6,-1) {};
    \node [disc] (r2) at ( 0.6,-1) {};
    \draw 
      (r) node[above] {2} -- (r1) node[below] {1} -- 
      (r)                 -- (r2) node[below] {1};
  \end{tikzpicture}
}
\newcommand{\bxxx}{
  \style
  \node [disc] (r1)  at ( 0,-1) {};
  \node [disc] (r11) at (-1,-2) {};
  \node [disc] (r12) at ( 0,-2) {};
  \node [disc] (r13) at ( 1,-2) {};
  \draw 
  (r1) -- (r11) node[below=1pt] {1} 
  (r1) -- (r12) node[below=1pt] {1}
  (r1) -- (r13) node[below=1pt] {1};
}
\newcommand{\eeev}{
  \style
  \node [disc] (r)    at ( 0, 3 ) {};
  \node [disc] (r1)   at ( 0, 2 ) {};
  \node [disc] (r11)  at ( 0, 1 ) {};
  \node [disc] (r111) at (-0.8, 0.1 ) {};
  \node [disc] (r112) at ( 0, 0.1 ) {};
  \draw 
  (r11) -- (r111) node[below=1pt] {1}
  (r11) -- (r112) node[below=1pt] {1};
}
\newcommand{\exampleforest}{
    \path
    node [disc] (1)   at (-2, -1) {} 
    node [disc] (11)  at (-3, -2) {}
    node [disc] (12)  at (-1, -2) {}
    node [disc] (112) at (-2, -3) {}
    node [disc] (111) at (-4, -3) {}
    node [disc] (2)   at ( 2, -1) {} 
    node [disc] (21)  at ( 1, -2) {}
    node [disc] (22)  at ( 2, -2) {}
    node [disc] (23)  at ( 3, -2) {};

    \draw 
    (1)  node[above left=-1pt]  {1} -- (11)  node[above left=-1pt]  {2}
    (1)                      -- (12)  node[below=1pt] {1}
    (11)                     -- (111) node[below=1pt] {1}
    (11)                     -- (112) node[below=1pt] {1}
    (2) node[above right=-1pt] {3}  -- (21)  node[below=1pt] {1}
    (2)                      -- (22)  node[below=1pt] {1}
    (2)                      -- (23)  node[below=1pt] {1};
  }
\newcommand{\exampletree}{
  \node [disc] (r) at (0,0) {}; 
  \exampleforest
  \draw
  (r) node[above=1pt] {4} -- (1)
  (r)                     -- (2);
}
\newcommand{\tri}{
  \filldraw[fill=\cfill, draw=\cdraw] 
  (0,0) -- (-2,-2) -- (2,-2) -- cycle;
  \path
  node [disc] (1)    at (0, 0) {} 
  node [disc] (11)   at (1,-1) {};
}
\newcommand{\extree}{
  \begin{center}
    \begin{tikzpicture}[ scale=\scl, baseline=0pt ]
      \discstyle
      \exampletree
    \end{tikzpicture}
  \end{center}
}
\title{An involution on $\beta(1,0)$-trees}
\author[A. Claesson]{Anders Claesson}
\author[S. Kitaev]{Sergey Kitaev}
\author[E. Steingr\'{\i}msson]{Einar Steingr\'{\i}msson}
\begin{document}

\begin{abstract}
  In [Decompositions and statistics for $\beta(1,0)$-trees and
  nonseparable permutations, Advances Appl. Math. 42 (2009) 313--328]
  we introduced an involution, $h$, on {\btrees}. We neglected,
  however, to prove that $h$ indeed is an involution.  In this note
  we provide the missing proof. We also refine an equidistribution
  result given in the same paper.
\end{abstract}

\maketitle

\thispagestyle{empty}

\section{Introduction}

A \emph{\btree}~\cite{JaGi98} is a rooted plane tree labeled with
positive integers such that
\begin{enumerate}
\item Leaves have label $1$.
\item The root has label equal to the sum of its children's labels.     
\item Any other node has label no greater than the sum of its
  children's labels.
\end{enumerate}
Below is an example of such a tree.\\[-3ex]
\extree

In~\cite{CKS} we introduced an involution, $h$, on {\btrees}. We also
gave a result on the equidistribution of certain statistics on
{\btrees}. A proof that $h$ indeed is an involution was, however, not
given; rather, the proof was said to be found in a forthcoming paper
that never materialised. The proof of the equidistribution was in
fact also omitted. In this note we give the two missing proofs. We
also refine the equidistribution result.

\section{The structure of {\btrees}}

We say a {\btree} on two or more nodes is \emph{indecomposable} if its
root has exactly one child and \emph{decomposable} if it has more than
one child. The {\btree} on one node, $\leaf$ = $\bx\!$, is neither
indecomposable nor decomposable. Let $\B_n$ be the set of all
{\btrees} on $n$ nodes, and let $\barB_n$ be the subset of $\B_n$
consisting of the indecomposable trees. Let $\B_n^k$ be the subset of
$\B_n$ consisting of the trees with root label $k$. For instance,
$$ \B_3=\big\{\;\,\bbbx\;,\;\bbbxx\;\big\}\qquad\;
\barB_3=\B_3^1=\big\{\;\bbbx\;\;\,\big\}\qquad\;
\B_3^2=\big\{\;\bbbxx\;\big\}
$$
Decomposable trees can be regarded as sums of indecomposable ones:
\begin{center}
  \begin{tikzpicture}[ scale=\scl, baseline=0pt ]
    \style    
    \exampletree
  \end{tikzpicture}
  $\quad = \quad$
  \begin{tikzpicture}[ scale=\scl, baseline=0pt ]
    \style
    \path
    node [disc] (r1)  at (-2,  0) {} 
    node [disc] (r2)  at ( 2,  0) {}; 
    \exampleforest
    \draw
    (r1) node[above left=-1pt]  {1} -- (1)   
    (r2) node[above right=-1pt] {3} -- (2);
    \node[font=\normalsize] at (0,0) {$\oplus$};
  \end{tikzpicture}
\end{center}
\smallskip 
In fact we do not need to require $u$ and $v$ to be indecomposable for
the sum $u\oplus v$ to make sense. In general, we define that the root
label of $u\oplus v$ is the sum of the root label of $u$ and the root
label of $v$, and that the subtrees of $u\oplus v$ are those of $u$
followed by those of $v$. So,
$$
\begin{tikzpicture}[ scale=\scl, baseline=(r.base) ]
  \style
  \node [disc] (r) at (0,1) {};
  \node [disc] (1) at (-1,0) {};
  \draw (r) node[above=1pt] 
        {\ensuremath{1}} -- (1) node[below=1pt] {\ensuremath{1}};
\end{tikzpicture}\oplus
\begin{tikzpicture}[ scale=\scl, baseline=(r1.base) ]
  \style
  \node [disc] (r1)  at ( 0,-1) {};
  \node [disc] (r11) at ( 1,-2) {};
  \node [disc] (r12) at ( 0,-2) {};
  \draw 
  (r1) -- (r11) node[below=1pt] {1} 
  (r1) -- (r12) node[below=1pt] {1}; 
  \draw (r1) node[above=1pt] {2};
\end{tikzpicture}
=
\begin{tikzpicture}[ scale=\scl, baseline=(r1.base) ]
  \bxxx
  \draw (r1) node[above=1pt] {3};
\end{tikzpicture} 
=
\begin{tikzpicture}[ scale=\scl, baseline=(r1.base) ]
  \style
  \node [disc] (r1)  at ( 0,-1) {};
  \node [disc] (r11) at (-1,-2) {};
  \node [disc] (r12) at ( 0,-2) {};
  \draw 
  (r1) -- (r11) node[below=1pt] {1} 
  (r1) -- (r12) node[below=1pt] {1}; 
  \draw (r1) node[above=1pt] {2};
\end{tikzpicture} \oplus
\begin{tikzpicture}[ scale=\scl, baseline=(r.base) ]
  \style
  \node [disc] (r) at (0,1) {};
  \node [disc] (1) at (1,0) {};
  \draw (r) node[above=1pt] 
        {\ensuremath{1}} -- (1) node[below=1pt] {\ensuremath{1}};
\end{tikzpicture}
$$ 

Further, there is a simple one-to-one correspondence $\lambda$ between
the Cartesian product $[k]\times\B_{n-1}^k$ and the disjoint union
$\cup_{i=1}^k\barB_{n}^i$, where $\barB_n^k$ is the subset of
$\barB_n$ consisting of the trees with root label $k$:
$$
\begin{tikzpicture}[ scale=\scl, baseline=(r11.base) ]
  \bxxx
  \draw (r1) node[above=1pt] {3};
\end{tikzpicture}
\,\raisebox{2ex}{$\substack{\lambda_1\\ \lra}$}\,
\begin{tikzpicture}[ scale=\scl, baseline=(r11.base) ]
  \bxxx
  \node [disc] (r)  at (0,0) {};
  \draw (r) node[above right=-1pt] {1} -- (r1) node[above right=-1pt] {1};
\end{tikzpicture}
\qquad\;\;
\begin{tikzpicture}[ scale=\scl, baseline=(r11.base) ]
  \bxxx
  \draw (r1) node[above=1pt] {3};
\end{tikzpicture}
\,\raisebox{2ex}{$\substack{\lambda_2\\ \lra}$}\,
\begin{tikzpicture}[ scale=\scl, baseline=(r11.base) ]
  \bxxx
  \node [disc] (r)  at (0,0) {};
  \draw (r) node[above right=-1pt] {2} -- (r1) node[above right=-1pt] {2};
\end{tikzpicture}
\qquad\;\;
\begin{tikzpicture}[ scale=\scl, baseline=(r11.base) ]
  \bxxx
  \draw (r1) node[above=1pt] {3};
\end{tikzpicture}
\,\raisebox{2ex}{$\substack{\lambda_3\\ \lra}$}\,
\begin{tikzpicture}[ scale=\scl, baseline=(r11.base) ]
  \bxxx
  \node [disc] (r)  at (0,0) {};
  \draw (r) node[above right=-1pt] {3} -- (r1) node[above right=-1pt] {3};
\end{tikzpicture}
$$ In general, if $t$ is a tree with root label $k$ and $i$ is an
integer such that $1\leq i\leq k$, then $\lambda_i t$ is obtained
from $t$ by joining a new root via an edge to the old root; and both
the new root and the old root are assigned the label $i$.

Thus each \btree, $t$, is of exactly one the following three forms:
\begin{itemize}
\item[] $t=\leaf$,      \hfill (the single node tree)
\item[] $t=u\oplus v$,   \hfill (decomposable)
\item[] $t=\lambda_i u$, where $1\leq i\leq \root u$,\hfill(indecomposable)
\end{itemize}
in which $u$ and $v$ are \btrees, and $\root u$ denotes the root label of $u$.
As an example of the encoding this
characterisation entails we have
$$    
\begin{tikzpicture}[ scale=\scl, baseline=(111.base) ]
  \style 
  \path
  node [disc] (1)   at ( 0, -1) {} 
  node [disc] (11)  at ( 0, -2) {}
  node [disc] (111) at (-.65, -3) {}
  node [disc] (112) at (0.65, -3) {};
  
  \draw 
  (1) node[right=2pt] {2} -- (11) node[right=2pt] {2}
  (11)                   -- (111) node[below=1pt]     {1}
  (11)                   -- (112) node[below=1pt]     {1};
\end{tikzpicture}
= \lambda_2\Big(
\begin{tikzpicture}[ scale=\scl, baseline=(111.base) ]
  \style 
  \path
  node [disc] (11)  at ( 0, -2) {}
  node [disc] (111) at (-0.65, -3) {}
  node [disc] (112) at ( 0.65, -3) {};
  
  \draw 
  (11) node[above=2pt] {2} -- (111) node[below=1pt] {1}
  (11)                     -- (112) node[below=1pt] {1};
\end{tikzpicture}
\Big)
= \lambda_2\Big(\bbx\oplus\bbx\,\Big)
= \lambda_2\Big(\lambda_1(\leaf)\oplus\lambda_1(\leaf)\,\Big)
$$

\section{An involution on {\btrees}}\label{h}

In this section we define an involution on {\btrees}. To that end we
now describe a new way of decomposing {\btrees}. Schematically the
sum $\oplus$ on {\btrees} is described by
\begin{center}
  \makebox[3.3cm][l]{
  \begin{tikzpicture}[semithick, scale=0.3]
    \def\tri{ -- +(1,-1.732) -- +(-1,-1.732) -- cycle};
    \draw (0,0) \tri;
    \filldraw[fill=white] (0,0) circle (2mm);
    \node[font=\footnotesize] at (0,0.7) {$a$};
    \node at (1.5,-0.6) {$\oplus$};
    \filldraw (3,0) \tri;
    \filldraw[fill=white] (3,0) circle (2mm);
    \node[font=\footnotesize] at (3,0.8) {$b$};
    \node at (5,-0.6) {=};
    \draw[shift={(8,0)}] (0,0) [rotate=-45] \tri;
    \filldraw[shift={(8,0)}] (0,0) [rotate=45 ] \tri;
    \filldraw[fill=white] (8,0) circle (2mm);
    \node[font=\footnotesize] at (8,0.8) {$a+b$};
  \end{tikzpicture}
  }
\end{center}
An alternative sum is\\[-5ex]
\begin{center}
  \makebox[3.3cm][l]{
  \begin{tikzpicture}[semithick, scale=0.3]
    \def\tri{ -- +(1,-1.732) -- +(-1,-1.732) -- cycle};
    \draw (0,0) \tri;
    \filldraw[fill=white] (0,0) circle (2mm);
    \node[font=\footnotesize] at (0,0.7) {$a$};
    \node at (1.5,-0.6) {$\obslash$};
    \filldraw (3,0) \tri;
    \filldraw[fill=white] (3,0) circle (2mm);
    \node[font=\footnotesize] at (3,0.8) {$b$};
    \node at (5,-0.6) {=};
    \draw (7,0) \tri;
    \filldraw (8,-1.732) \tri;
    \filldraw[fill=white] (7,0) circle (2mm);
    \filldraw[fill=white] (8,-1.732) circle (2mm);
    \node[font=\footnotesize] at (7,0.7) {$a$};
    \node[font=\footnotesize] at (8.7,-1.732) {$1$};
  \end{tikzpicture}
  }
\end{center}
That is, to get $u\obslash v$ we join $u$ and $v$ by identifying the
rightmost leaf in $u$ with the root of $v$, and that node is assigned
the label $1$.

The \emph{right path} is the path from the root to the rightmost leaf.
Let $\rpath(t)$ denote the length of (number of edges on) the
right path of $t$. Note that
\begin{align}
  \root(u\oplus v)  &= \root u + \root v\\
  \rpath(u\oplus v) &= \rpath v  \\
  \shortintertext{while}
  \root(u\obslash v)  &= \root u \\
  \rpath(u\obslash v) &= \rpath u  + \rpath v \label{rpath_obslash}.
\end{align}
for $u\neq\leaf$ and $v\neq\leaf$. Thus, with respect to $\obslash$,
$\rpath$ plays the role of $\root$, and vice versa. There is also a
map $\gamma$ that plays a role analogous to that of $\lambda$:
$$
\begin{tikzpicture}[scale=\scl, baseline=(r11.base) ]
  \eeev;
  \draw (r) node[above left=-0.5pt] {1} -- 
  (r1) node[above left=-0.5pt] {1} -- 
  (r11) node[above left=-0.5pt] {2};
\end{tikzpicture}
\;\;\raisebox{2ex}{$\substack{\gamma_1\\ \lra}$}\;
\begin{tikzpicture}[ scale=\scl, baseline=(r11.base) ]
  \eeev;
  \node [disc] (r2) at ( 0.8, 2 ) {};
  \draw (r) node[above right=-0.5pt] {2} -- 
  (r1) node[above left=-0.5pt] {1} -- 
  (r11) node[above left=-0.5pt] {2};
  \draw (r) -- (r2) node[below=1pt] {1};
\end{tikzpicture}
\qquad\quad\;
\begin{tikzpicture}[scale=\scl, baseline=(r11.base) ]
  \eeev;
  \draw (r) node[above left=-0.5pt] {1} -- 
  (r1) node[above left=-0.5pt] {1} -- 
  (r11) node[above left=-0.5pt] {2};
\end{tikzpicture}
\;\;\raisebox{2ex}{$\substack{\gamma_2\\ \lra}$}\;
\begin{tikzpicture}[ scale=\scl, baseline=(r11.base) ]
  \eeev;
  \node [disc] (r12) at ( 0.8, 1 ) {};
  \draw (r) node[above right=-0.5pt] {2} -- 
  (r1) node[above right=-0.5pt] {2} -- 
  (r11) node[above left=-0.5pt] {2};
  \draw (r1) -- (r12) node[below=1pt] {1};
\end{tikzpicture}
\qquad\quad\;
\begin{tikzpicture}[scale=\scl, baseline=(r11.base) ]
  \eeev;
  \draw (r) node[above left=-0.5pt] {1} -- 
  (r1) node[above left=-0.5pt] {1} -- 
  (r11) node[above left=-0.5pt] {2};
\end{tikzpicture}
\;\;\raisebox{2ex}{$\substack{\gamma_3\\ \lra}$}\;
\begin{tikzpicture}[ scale=\scl, baseline=(r11.base) ]
  \eeev;
  \node [disc] (r112) at ( 0.8, 0.1 ) {};
  \draw (r) node[above right=-0.5pt] {2} -- 
  (r1) node[above right=-0.5pt] {2} -- 
  (r11) node[above right=-0.5pt] {3};
  \draw (r11) -- (r112) node[below=1pt] {1};
\end{tikzpicture}
$$ Here is how $\gamma_i t$ is defined in general: Assume that the
length of the right path of $t$ is $k$ and that $i$ is an integer such
that $1\leq i\leq k$. Let us by $x$ refer to the $i$th node on the
right path of $t$. Then $\gamma_i t$ is obtained from $t$ by joining
a new leaf via an edge to $x$, making the new leaf the rightmost leaf
in $\gamma_i t$; and, lastly, add $1$ to the label of each node on
the right path, except for the new leaf. Note that $\rpath\gamma_i t = i$.

We explore the two ways to decompose {\btrees} we now have by defining
an endofunction $h:\B\to\B$ as follows:
\begin{align*}
  h(\leaf)      &= \leaf; \\
  h(\lambda_it) &= \gamma_ih(t); \\
  h(u\oplus v)  &= h(v) \obslash h(u).
\end{align*}
For instance,
$$
\begin{tikzpicture}[ scale=0.35, baseline=-7.5ex ]
  \discstyle
  \exampletree
\end{tikzpicture}\,
\begin{aligned}
\;=\;\;&
  \lambda_1\Big(\lambda_2\big(\,\edge\oplus\edge\,\big)\oplus\edge\,\Big)\oplus
    \lambda_3\big(\,\edge\oplus\edge\oplus\edge\,\big)\\
\raisebox{.6ex}{$\substack{\textstyle{h}\\ \textstyle{\to}}$}\;&
\gamma_3\big(\,\edge\obslash\edge\obslash\edge\,\big)
\obslash\gamma_1\Big(\,\edge\obslash\gamma_2\big(\,\edge\obslash\edge\,\big)\Big)
\;=\,
\end{aligned}
\begin{tikzpicture}[scale=0.35, baseline=(r1.base) ]
  \style
  \node [disc] (r)       at ( 0,   7.4 ) {};
  \node [disc] (r1)      at ( 0,   6.2 ) {};
  \node [disc] (r11)     at ( 0,   5   ) {};
  \node [disc] (r111)    at (-0.8, 4   ) {};
  \node [disc] (r112)    at ( 0.8, 4   ) {};
  \node [disc] (r1121)   at ( 0,   3   ) {};
  \node [disc] (r11211)  at ( 0,   1.8 ) {};
  \node [disc] (r112111) at (-0.8, 0.8 ) {};
  \node [disc] (r112112) at ( 0.8, 0.8 ) {};
  \node [disc] (r1122)   at ( 1.6, 3   ) {};
  \draw 
  (r) node[above=1pt] {2} -- (r1) node[above right=-1pt] {2} 
  (r1) -- (r11) node[above right=-.8pt] {2} 
  (r11) -- (r111) node[left=1pt] {1}
  (r11) -- (r112) node[above right=-.8pt] {1}
  (r112) -- (r1121) node[left=1pt] {1}
  (r1121) -- (r11211) node[above right=-.8pt] {2}
  (r11211) -- (r112111) node[below=1pt] {1}
  (r11211) -- (r112112) node[below=1pt] {1}
  (r112) -- (r1122) node[below=1pt] {1};
\end{tikzpicture}
$$ 
We will soon see that $h$ is in fact an involution!  First we state
some almost self-evident lemmas about relations between $\oplus$,
$\obslash$, $\lambda$, and $\gamma$.

\begin{lemma}\label{LG}
  Let $t$, $u$, and $v$ be {\btrees}. Then
  $$t\oplus(u \obslash v) = (t\oplus u) \obslash v.
  $$
\end{lemma}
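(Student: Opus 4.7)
The plan is to verify the identity by directly comparing the root labels and the ordered lists of children of the two trees. Throughout, let $t$ have children $t_1,\dots,t_p$ and $u$ have children $u_1,\dots,u_q$, with $q\ge 1$ so that $u\obslash v$ is defined.

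First I would check that the root labels agree. The identities $\root(w\oplus w')=\root w + \root w'$ and $\root(w\obslash w')=\root w$ recorded just above give $\root\big(t\oplus(u\obslash v)\big)=\root t+\root u=\root(t\oplus u)=\root\big((t\oplus u)\obslash v\big)$, so both sides have the same root label.

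Next I would compare the children. The key observation is that $\obslash$ modifies a tree only inside the subtree rooted at its rightmost child, since the rightmost leaf of the whole tree lies there. Writing $u_q'$ for the tree obtained from $u_q$ by identifying its rightmost leaf with the root of $v$ and relabelling that node $1$, the tree $u\obslash v$ will have children $u_1,\dots,u_{q-1},u_q'$, so $t\oplus(u\obslash v)$ will have children $t_1,\dots,t_p,u_1,\dots,u_{q-1},u_q'$. On the other side, $t\oplus u$ has children $t_1,\dots,t_p,u_1,\dots,u_q$, whose rightmost subtree is still $u_q$, so $(t\oplus u)\obslash v$ will have the same children $t_1,\dots,t_p,u_1,\dots,u_{q-1},u_q'$. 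The two trees then coincide.

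The main obstacle worth flagging is making rigorous the localisation claim that $\obslash$ only touches the rightmost child. I would handle this by a short induction on $|u|$, using the three-case recursive description ($\leaf$, $u\oplus v$, or $\lambda_i u$) from the preceding section; in either the indecomposable or the decomposable case, the statement reduces to directly unfolding the definition of $\obslash$.
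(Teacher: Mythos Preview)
Your argument is correct. The paper does not actually supply a proof of this lemma; it simply introduces it as one of several ``almost self-evident'' relations between $\oplus$, $\obslash$, $\lambda$, and $\gamma$, and leaves the verification to the reader. What you wrote is precisely the direct check any reader would perform: match root labels using equations~(1) and~(3), and observe that because the rightmost leaf of a non-leaf tree lies in its rightmost subtree, the operation $\obslash v$ affects only that subtree, so both sides have the same ordered list of children.

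One small remark: the ``localisation'' you flag as the main obstacle does not really require an induction on $|u|$. It follows immediately from the definition of $\obslash$ once you note that the rightmost leaf of any tree with at least one child is the rightmost leaf of its rightmost child. So the last paragraph is more cautious than necessary, but not wrong.
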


\begin{lemma}\label{lG&gL} 
  Let $u$ and $v$ be {\btrees}. Then
  \begin{align*}
    \lambda_i(u \obslash v) &= (\lambda_iu)\obslash v\/;\\
    \gamma_i(u\oplus v) &= u\oplus(\gamma_iv).
  \end{align*}
\end{lemma}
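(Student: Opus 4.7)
My plan is to verify each identity by unpacking the operations at the level of subtree lists and node labels; both are essentially geometric observations about which portion of the tree each operation disturbs.

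For the first identity, I would note that $\lambda_i$ acts only at the top of a tree (it adjoins a new root above the existing root and labels both endpoints of the new edge with $i$), while $\obslash$ acts only at the bottom right (it identifies the rightmost leaf of $u$ with the root of $v$, labelling that node $1$). The rightmost leaf of $\lambda_i u$ coincides, as a node of $u$, with the rightmost leaf of $u$, so the two operations act on disjoint parts of the tree and commute. Since $\root(u\obslash v)=\root u$, the hypothesis $1\le i\le\root u$ makes $\lambda_i$ defined on both sides simultaneously, and both resulting trees have a root labelled $i$ whose unique child is the root of $u$ with $v$ grafted at the rightmost leaf.

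For the second identity, the first step is to invoke $\rpath(u\oplus v)=\rpath v$ to see that $\gamma_i$ is defined on $u\oplus v$ iff it is defined on $v$. The key geometric observation is that the right path of $u\oplus v$ starts at the new root and then drops immediately into the last subtree of $v$, so from depth~$2$ onward it coincides pointwise with the right path of $v$. Hence for $i\ge 2$ the distinguished node $x$ is literally the same node of $v$ on both sides, $\gamma_i$ increments exactly the labels on the path from $x$ up to (but not including) the topmost root in either tree, and the resulting root label is $\root u+\root v+1$ on both sides. In the $i=1$ case, $\gamma_1(u\oplus v)$ appends a new leaf as a final child of the root of $u\oplus v$, whereas $u\oplus(\gamma_1 v)$ appends a new leaf as a final child of the root of $\gamma_1 v$ and that list then becomes part of the subtree list of the new root; either way the final subtree list is (subtrees of $u$) followed by (subtrees of $v$) followed by the new leaf, and the root label is $\root u+\root v+1$.

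The main point of care is the $i=1$ case above, since there the roles of ``root of $v$'' and ``root of $u\oplus v$'' are played by different nodes; but the subtree-list description makes the agreement transparent. The remaining verifications are routine enough that the authors correctly describe the lemma as \emph{almost self-evident}.
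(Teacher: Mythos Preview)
Your verification is correct: both identities follow by observing that $\lambda_i$ disturbs only the top of the tree while $\obslash$ disturbs only the rightmost leaf, and dually that $\gamma_i$ acts only along the right path of $v$ inside $u\oplus v$ while $\oplus$ only merges roots; your separate handling of the $i=1$ case and the root-label bookkeeping is exactly the point that needs checking. The paper itself offers no proof at all---it simply declares the lemma ``almost self-evident''---so your argument is a faithful elaboration of what the authors leave implicit rather than a different approach.
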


\begin{lemma}\label{lg}
  Let $t$ be a {\btree}. Then
  \begin{align*}
    \gamma_1 t  &= t \oplus \edge\,; \\
    \lambda_1 t &= \edge \obslash t\,;      \\
    \gamma_{i+1}\lambda_{j} &= \lambda_{j+1}\gamma_{i}.
  \end{align*}
\end{lemma}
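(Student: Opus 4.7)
My plan is to prove each of the three identities by unfolding both sides using the constructive definitions of $\oplus$, $\obslash$, $\lambda_i$, and $\gamma_i$ from Section~\ref{h}, and checking that the resulting \btrees{} coincide as labeled plane trees; the earlier Lemmas~\ref{LG} and~\ref{lG&gL} are not needed.

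For $\gamma_1 t = t \oplus \edge$, I would note that $\edge$ has root label $1$ and a single leaf as its only subtree, so $t \oplus \edge$ has root label $\root t + 1$ and its list of subtrees is that of $t$ followed by one leaf. On the other side, $\gamma_1$ attaches a new leaf as the rightmost child of the root (the first node on the right path) and increments every label on the new right path except the new leaf's; since that right path has length $1$, only the root label is affected. The two trees coincide. The second identity, $\lambda_1 t = \edge \obslash t$, is similar: the rightmost leaf of $\edge$ is its unique non-root node, so $\edge \obslash t$ identifies that leaf with the root of $t$, relabels the merged node $1$, and leaves the root of $\edge$ above, also labeled $1$; this is exactly $\lambda_1 t$.

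The substantive step is the third identity $\gamma_{i+1}\lambda_j = \lambda_{j+1}\gamma_i$, viewed as an identity of partial functions on \btrees. Given a \btree{} $t$ on which both sides are defined, the right path of $\lambda_j t$ is the new root followed by the right path of $t$, so its $(i+1)$st node is the $i$th right-path node of $t$ --- exactly the node at which $\gamma_i$ attaches its leaf on the right-hand side. Hence both sides produce the same underlying plane tree: $t$ with a new rightmost leaf grafted at its $i$th right-path node and a new root placed on top. For the labels, a short case check at the new root, the old root of $t$, and each of the 2nd through $i$th right-path nodes of $t$ suffices. On the left, $\lambda_j$ writes $j$ at the top two nodes and then $\gamma_{i+1}$ adds $1$ to the first $i+1$ right-path labels; this yields $j+1$ on top and $+1$ at the 2nd--$i$th right-path nodes of $t$. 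On the right, $\gamma_i$ adds $1$ to the first $i$ right-path labels of $t$ and then $\lambda_{j+1}$ overwrites the old root with $j+1$ and places a new root labeled $j+1$ above. The two labelings agree at every node.

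None of the verifications is conceptually deep; the only obstacle is the bookkeeping in the third identity, where one must track precisely which right-path labels are incremented by a $\gamma$ and which are overwritten by a $\lambda$, and confirm that the index shift $i \mapsto i+1$ induced by prepending a new root in $\lambda_j t$ yields exactly the same set of incremented nodes as the $\gamma_i$ on the other side.
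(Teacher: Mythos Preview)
Your proposal is correct. The paper does not actually give a proof of this lemma; it introduces Lemmas~\ref{LG}--\ref{lg} together as ``almost self-evident'' and states them without argument. Your direct unfolding of the definitions of $\oplus$, $\obslash$, $\lambda_i$, and $\gamma_i$ is precisely the verification the paper leaves to the reader, and the bookkeeping for the third identity (in particular, that the domains of the two partial functions coincide and that the old root of $t$ ends up labeled $j+1$ on both sides, once by increment-after-overwrite and once by overwrite-after-increment) is handled correctly.
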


Next we apply the lemmas above to prove the following lemma which is
the most crucial component in establishing that $h$ is an involution.

\begin{lemma}\label{h-dual}
  Let $t$, $u$, and $v$ be {\btrees}. Then
  $$h(\leaf) = \leaf,\;\;
  h(\gamma_it) = \lambda_ih(t),\,\text{ and }\;
  h(u\obslash v) = h(v)\oplus h(u).
  $$
\end{lemma}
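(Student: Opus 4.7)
The identity $h(\leaf) = \leaf$ is immediate from the definition of $h$, so only the other two identities require work. My plan is to prove them together by strong induction on the number of nodes in the argument of $h$ on the left-hand side. The two claims are intertwined—each unfolding of the definition of $h$ on an intermediate tree may switch between them—but every unfolding strictly decreases the node count, so the induction closes.

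For $h(\gamma_i t) = \lambda_i h(t)$, the case $i = 1$ is handled uniformly via Lemma \ref{lg}: from $\gamma_1 t = t \oplus \edge$, the definition of $h$, the direct computation $h(\edge) = \edge$, and $\edge \obslash h(t) = \lambda_1 h(t)$ (Lemma \ref{lg} again) one gets $h(\gamma_1 t) = \lambda_1 h(t)$. For $i \geq 2$ I split on the structural form of $t$. If $t = u \oplus v$, Lemma \ref{lG&gL} moves $\gamma_i$ past the sum to give $u \oplus \gamma_i v$; the definition of $h$, the inductive hypothesis applied to the strictly smaller tree $\gamma_i v$, and the first identity of Lemma \ref{lG&gL} combine to rewrite the result as $\lambda_i(h(v) \obslash h(u)) = \lambda_i h(u\oplus v)$. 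If $t = \lambda_j u$, the third identity of Lemma \ref{lg} rewrites $\gamma_i \lambda_j u$ as $\lambda_{j+1}\gamma_{i-1} u$; the definition of $h$ and the inductive hypothesis on the smaller tree $\gamma_{i-1} u$ reduce the left-hand side to $\gamma_{j+1}\lambda_{i-1} h(u)$, which equals $\lambda_i \gamma_j h(u) = \lambda_i h(\lambda_j u)$ by one final application of Lemma \ref{lg}.

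For $h(u\obslash v) = h(v) \oplus h(u)$, I case-split on the structural form of $u$. If $u = u_1 \oplus u_2$, Lemma \ref{LG} reassociates the left-hand side as $u_1 \oplus (u_2 \obslash v)$; the definition of $h$, the inductive hypothesis applied to the smaller tree $u_2 \obslash v$, and Lemma \ref{LG} again yield $h(v) \oplus h(u_1 \oplus u_2)$. If $u = \lambda_j w$, the first identity of Lemma \ref{lG&gL} pulls $\lambda_j$ outside the $\obslash$ to give $\lambda_j(w \obslash v)$; the definition of $h$, the inductive hypothesis on $w \obslash v$, and the second identity of Lemma \ref{lG&gL} complete the case.

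The real obstacle is bookkeeping rather than insight: one must check that every intermediate argument of $h$ has strictly fewer nodes than the original left-hand-side argument, so that the inductive hypothesis applies. A uniform node count confirms this in each case; once that verification is in hand, the proof is a systematic application of the commutation relations supplied by Lemmas \ref{LG}, \ref{lG&gL}, and \ref{lg}.
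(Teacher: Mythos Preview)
Your proof is correct and follows essentially the same route as the paper: strong induction on the number of nodes, with the second claim split according to the structural form of $t$ and the third according to the structural form of $u$, each case discharged via Lemmas~\ref{LG}, \ref{lG&gL}, and \ref{lg}. The only cosmetic difference is that you treat the $i=1$ case of $h(\gamma_i t)=\lambda_i h(t)$ uniformly for all $t$ before splitting on structure, whereas the paper handles $i=1$ only inside the subcase $t=\lambda_j u$; both organizations are valid.
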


\begin{proof}
  We use induction on the number of nodes. The base case is
  trivial. The proof of the second claim is split into two cases:\\
  Case 1, $t=\lambda_ju$: We shall prove that $h(\gamma_i\lambda_ju) =
  \lambda_i h(\lambda_ju)$ for all positive integers $i$ and $j$. If
  $i=1$, then\smallskip
  \begin{align*}
    h(\gamma_1\lambda_j u)
    &= h(\lambda_j u\oplus \edge)          &&\text{by Lemma \ref{lg}}\\
    &= h(\,\edge\,)\obslash h(\lambda_j u) &&\text{by definition of $h$}\\
    &= \edge \obslash \gamma_jh(u)         &&\text{by definition of $h$}\\
    &= \lambda_1\gamma_j h(u)              &&\text{by Lemma \ref{lg}}\\
    &= \lambda_1h(\lambda_j u)             &&\text{by definition of $h$}
    \shortintertext{If $i>1$, then}
    h(\gamma_i\lambda_ju) 
    &= h(\lambda_{j+1}\gamma_{i-1}u)     &&\text{by Lemma \ref{lg}}\\
    &= \gamma_{j+1}h(\gamma_{i-1}u)      &&\text{by definition of $h$}\\
    &= \gamma_{j+1}\lambda_{i-1}h(u)     &&\text{by induction}\\
    &= \lambda_i\gamma_jh(u)             &&\text{by Lemma \ref{lg}}\\
    &= \lambda_i h(\lambda_ju)           &&\text{by definition of $h$}\\
    \shortintertext{Case 2, $t=u\oplus v$:}
    h\gamma_i(u\oplus v)
    &= h(u\oplus\gamma_iv)               &&\text{by Lemma \ref{lG&gL}}\\
    &= h(\gamma_iv) \obslash h(u)            &&\text{by definition of $h$}\\
    &= \lambda_i h(v) \obslash h(u)          &&\text{by induction}\\
    &= \lambda_i\big(h(v) \obslash h(u)\big) &&\text{by Lemma \ref{lG&gL}}\\
    &= h(u\oplus v)                      &&\text{by definition of $h$}
  \end{align*}  
  The proof of the third claim is also split into two cases.\\
  Case 1, $u=\lambda_it$:\smallskip
  \begin{align*}
    h(\lambda_it \obslash v)
    &= h\lambda_i(t \obslash v)          &&\text{by Lemma \ref{lG&gL}}\\
    &= \gamma_ih(t \obslash v)           &&\text{by Lemma \ref{h-dual}}\\
    &= \gamma_i\big(h(t)\oplus h(v)\big) &&\text{by induction}\\
    &= h(v)\oplus\gamma_ih(t)            &&\text{by Lemma \ref{lG&gL}}\\
    &= h(v)\oplus h(\lambda_it)          &&\text{by definition of $h$}
    \shortintertext{Case 2, $u=s\oplus t$:}
    h((s \oplus t) \obslash v)
    &= h (s\oplus (t \obslash v))              &&\text{by Lemma \ref{LG}}\\
    &= h(t \obslash v) \obslash h(s)           &&\text{by definition of $h$}\\
    &= \big(h(v)\oplus h(t)\big) \obslash h(s) &&\text{by induction}\\
    &= h(v)\oplus\big(h(t) \obslash h(s)\big)  &&\text{by Lemma \ref{LG}}\\
    &= h(v)\oplus h\big(s\oplus t \big)        &&\text{by definition of $h$}
  \end{align*}
\end{proof}

\begin{theorem}
  The function $h$ is an involution.
\end{theorem}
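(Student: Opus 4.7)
The plan is to prove the theorem by straightforward structural induction on the number of nodes of $t$, using the three-way case decomposition of {\btrees} recalled in Section~2 together with Lemma~\ref{h-dual}. The real work has already been done in Lemma~\ref{h-dual}; once we have the ``dual'' rules $h(\gamma_i t)=\lambda_i h(t)$ and $h(u\obslash v) = h(v)\oplus h(u)$ available alongside the defining rules of $h$, the involution property essentially writes itself.

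The base case $h(h(\leaf)) = h(\leaf) = \leaf$ is immediate from the definition. For the inductive step, every \btree{} $t$ with at least two nodes is either indecomposable, $t=\lambda_i u$ with $1\le i\le \root u$, or decomposable, $t=u\oplus v$, and in each case $u$ and $v$ have strictly fewer nodes than $t$. If $t=\lambda_i u$, then by the definition of $h$, $h(t)=\gamma_i h(u)$; applying $h$ again, by the second clause of Lemma~\ref{h-dual} we get $h(h(t))=h(\gamma_i h(u))=\lambda_i h(h(u))$, and the inductive hypothesis yields $\lambda_i h(h(u)) = \lambda_i u = t$. If instead $t=u\oplus v$, then $h(t)=h(v)\obslash h(u)$ by definition, and the third clause of Lemma~\ref{h-dual} gives $h(h(t)) = h(h(v)\obslash h(u)) = h(h(u))\oplus h(h(v))$, which by induction equals $u\oplus v = t$.

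There is no real obstacle in this final step: the symmetry between the $(\oplus,\lambda)$-recursion defining $h$ and the $(\obslash,\gamma)$-recursion established for $h$ in Lemma~\ref{h-dual} means that one application of $h$ followed by another application of $h$ simply unwinds the constructors in the reverse order, returning the original tree. The only point worth noting when writing the proof is that the induction is well-founded because both the $\oplus$-decomposition and the $\lambda$-decomposition strictly decrease the node count of the subtrees being fed back into $h\circ h$.
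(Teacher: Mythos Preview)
Your proof is correct and follows essentially the same approach as the paper's: structural induction on the number of nodes, handling the indecomposable and decomposable cases separately via the defining rules for $h$ together with Lemma~\ref{h-dual}. In fact, in the decomposable case you correctly obtain $h(h(u))\oplus h(h(v))=u\oplus v$, whereas the paper's displayed computation ends with $h^2(v)\oplus h^2(u)=v\oplus u$, which appears to be a small slip of the pen.
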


\begin{proof}
  We proceed by induction. By definition $h(\leaf)=\leaf$;
  using that twice the base case follows. For the induction step we
  consider indecomposable and decomposable trees separately.  First, for
  indecomposable trees:
  $$ h^2(\lambda_it)
  = h\big(\gamma_ih(t)\big)
  = \lambda_i h^2(t)=\lambda_i(t).
  $$ Here we have used the definition of $h$, Lemma~\ref{h-dual}, and
  the induction hypothesis. Second, for decomposable trees:
  $$ h^2(u\oplus v) 
  = h\big(h(v)\obslash h(u)\big) 
  = h^2(v)\oplus h^2(u) 
  = v\oplus u.
  $$ Again, we used the definition of $h$, Lemma~\ref{h-dual}, and
  the induction hypothesis.
\end{proof}

\section{Statistics on {\btrees}}\label{tree_stats}

Let $t$ be a \btree. Recall that by $\root t$ we denote the label of
the root. Recall also that the \emph{right path} is the path from the
root to the rightmost leaf, and that the length of the right path is
denoted $\rpath t$.

By $\leaves t$ we denote the number of leaves in $t$; by
$\internal t$ we denote the number of internal nodes (or nonleaves)
in $t$. Note that the root is an internal node.

The number of subtrees (or, equivalently, the number of children of
the root) is denoted $\sub t$.  Further, the number of $1$'s below
the root on the right path is denoted $\rsub t$.

\begin{theorem}\label{thm_h} 
  On {\btrees} with at least one edge, the involution $h$ sends
  the first tuple below to the second.
  $$
  \begin{array}{lllllllll}
    ( &\leaves,    &\internal, &\root,  &\rpath, &\sub,  &\rsub & ) \\
    ( & \internal, &\leaves,   &\rpath, &\root,  &\rsub, &\sub  & )
  \end{array}
  $$
\end{theorem}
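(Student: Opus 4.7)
I would proceed by induction on the number of nodes, mirroring the three cases of the recursion for $h$. The base case is the two-node tree $\edge = \lambda_1(\leaf)$, for which $h(\edge) = \edge$ and every one of the six statistics equals $1$, making the claimed swap tautological. The inductive step splits according to whether $t$ is indecomposable ($t = \lambda_i u$) or decomposable ($t = u \oplus v$), and in each case one applies the corresponding defining clause of $h$ together with the induction hypothesis on the proper subtree(s).

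All of the real content lives in a preparatory step: tabulating how each of the six statistics transforms under the four building blocks $\lambda_i$, $\gamma_i$, $\oplus$, $\obslash$. The identities for $\root$ and $\rpath$ under $\oplus$ and $\obslash$ are already displayed in the text; the remaining ten formulas are immediate from the definitions. For instance, $\root(\lambda_i u) = i$ and $\root(\gamma_i u) = \root u + 1$ (the latter because $\gamma_i$ increments every label on the right path); $\leaves(u\oplus v) = \leaves u + \leaves v$ while $\leaves(u\obslash v) = \leaves u + \leaves v - 1$ (since the merged rightmost-leaf-of-$u$ stops being a leaf); $\sub(\lambda_i u) = 1$ and $\sub(\gamma_i u) = \sub u + [i=1]$; and $\rsub(\lambda_i u) = [i=1] + \rsub u$ together with $\rsub(\gamma_i u) = 1$.

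The decisive feature of the resulting list is that it is invariant under the simultaneous exchange $\lambda_i\leftrightarrow\gamma_i$, $\oplus\leftrightarrow\obslash$ together with the swap of the two statistics within each of the pairs $(\leaves,\internal)$, $(\root,\rpath)$, $(\sub,\rsub)$. Once this symmetry is in place, each of the six statistic-identities claimed by the theorem reduces to a one-line substitution: for $t = \lambda_i u$ one has $\rsub(h(\lambda_i u)) = \rsub(\gamma_i h(u)) = 1 = \sub(\lambda_i u)$ and $\sub(h(\lambda_i u)) = \sub h(u) + [i=1] = \rsub u + [i=1] = \rsub(\lambda_i u)$ by the induction hypothesis, with an exactly analogous computation for the other five statistic/shape combinations.

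The main obstacle is nothing more than careful verification of the transformation formulas above. Two of them require a moment's thought: $\rsub(\gamma_i u) = 1$ holds because $\gamma_i$ increments every label on the right path, turning every pre-existing $1$ into a $2$ and leaving only the newly attached rightmost leaf to contribute a $1$; and the matched $[i=1]$ terms in $\sub(\gamma_i u)$ and $\rsub(\lambda_i u)$ must line up correctly under the symmetry---they do, because $\gamma_1$ is the unique $\gamma_i$ that attaches a new child to the root, while $\lambda_i$ relabels the old root with the value $i$, which equals $1$ exactly when $i=1$.
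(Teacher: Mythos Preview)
Your proposal is correct and follows essentially the same route as the paper: induction on the number of nodes, with the inductive step split into the indecomposable case $t=\lambda_i u$ and the decomposable case $t=u\oplus v$, each handled by the defining clause of $h$ together with the transformation rules for the statistics under $\lambda_i,\gamma_i,\oplus,\obslash$. The paper only writes out the $\root/\rpath$ pair (and then invokes the involution property to get the reverse direction for free), declaring the other pairs ``similar''; your tabulation of all six transformation rules and the observed $\lambda\!\leftrightarrow\!\gamma$, $\oplus\!\leftrightarrow\!\obslash$ symmetry simply makes that ``similarly'' explicit.
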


\begin{proof}
  We shall use induction to show that $\rpath h(t)=\root t$ and that 
  $\root h(t)=\rpath t$; the other claims follow similarly. The base
  case is trivial. Assume that
  $t=\lambda_iu$ is indecomposable. Then
  $$ \rpath h(\lambda_iu) = \rpath\gamma_ih(u) = i = \root\lambda_iu
  $$ 
  by definition of $h$, definition of $\root$ and $\rpath$, respectively.
  Furthermore, for a decomposable tree $t=u\oplus v$ we have
  \begin{align*}
    \rpath h(u\oplus v) 
    &= \rpath \big(h(u) \obslash h(v)\big) && \text{by definition of $h$} \\
    &= \rpath h(u) + \rpath h(v)    && \text{by \eqref{rpath_obslash}} \\
    &= \root u + \root v            && \text{by induction} \\
    &= \root (u\oplus v)            && \text{by definition of $\root$}
  \end{align*}
  We have thus shown that $\rpath h(t) = \root t$ for any {\btree}
  $t$. That $\root h(t) = \rpath t$ follows from $h$ being an involution.
\end{proof}

The above theorem can be strengthened by introducing what we call
labeled \btrees.
$$
\begin{tikzpicture}[ scale=\scl, baseline=(111.base) ]
  \style 
  \path
  node [disc] (1)   at ( 0, -1) {} 
  node [disc] (11)  at ( 0, -2) {}
  node [disc] (111) at (-.65, -3) {}
  node [disc] (112) at (0.65, -3) {};
  
  \draw 
  (1) node[right=2pt] {$(2,a)$} -- (11) node[right=2pt]  {$(2,b)$}
  (11)                          -- (111) node[below=1pt] {$(1,c)$\;\;}
  (11)                          -- (112) node[below=1pt] {\;\;$(1,d)$};
\end{tikzpicture}
$$
This is a \btree\ in which each node has been assigned a unique
label. In this example, the labels are taken from the alphabet
$\{a,b,c,d\}$.  A recursive characterization of labeled {\btrees} reads
  as follows. A \emph{labeled \btree} is of exactly one of the three
  forms:
\begin{enumerate}
  \setcounter{enumi}{-1}
\item $(1,x)$, a leaf with label $x$;
\item $\lambda((i,x),t)$, where $t$ is a labeled {\btree} and $i\leq\root t$;
\item $u \oplus v$, where $u$ and $v$ are labeled {\btrees}.
\end{enumerate}
Here we assume that the function $\root$ is extended to labeled
{\btrees} by simply ignoring the extra labels. Also, $\lambda$ and
$\oplus$ are extended to labeled {\btrees} in the obvious way:
$$
\begin{tikzpicture}[ scale=\scl, baseline=(111.base) ]
  \style 
  \path
  node [disc] (1)   at ( 0, -1) {} 
  node [disc] (11)  at ( 0, -2) {}
  node [disc] (111) at (-.65, -3) {}
  node [disc] (112) at (0.65, -3) {};
  
  \draw 
  (1) node[right=2pt] {$(2,a)$} -- (11)  node[right=2pt] {$(2,b)$}
  (11)                        -- (111) node[below=1pt] {$(1,c)$\;\;}
  (11)                        -- (112) node[below=1pt] {\;\;$(1,d)$};
\end{tikzpicture}
= \lambda\Big((2,a),\!\!
\begin{tikzpicture}[ scale=\scl, baseline=(111.base) ]
  \style 
  \path
  node [disc] (11)  at ( 0, -2) {}
  node [disc] (111) at (-0.65, -3) {}
  node [disc] (112) at ( 0.65, -3) {};
  
  \draw 
  (11) node[above=2pt] {$(2,b)$}-- (111) node[below=1pt] {$(1,c)$\;\;}
  (11)                             -- (112) node[below=1pt] {\;\;$(1,d)$};
\end{tikzpicture}
\Big)
= \lambda\Big((2,a),\!
  \begin{tikzpicture}[ scale=\scl, baseline=-2pt ]
    \style
    \node [disc] (r) at (0,1) {};
    \node [disc] (1) at (0,0) {};
    \draw (r) node[above=1pt] 
          {\ensuremath{(1,b)}} 
          -- (1) node[below=1pt] {\ensuremath{(1,c)}};
  \end{tikzpicture}
\!\!\oplus\!\!
  \begin{tikzpicture}[ scale=\scl, baseline=-2pt ]
    \style
    \node [disc] (r) at (0,1) {};
    \node [disc] (1) at (0,0) {};
    \draw (r) node[above=1pt] 
          {\ensuremath{(1,b)}} 
          -- (1) node[below=1pt] {\ensuremath{(1,d)}};
  \end{tikzpicture}
\Big)
$$
Similarly, we extend $\gamma$ and $\obslash$:
$$
\begin{tikzpicture}[ scale=\scl, baseline=(111.base) ]
  \style 
  \path
  node [disc] (1)   at ( 0, -1) {} 
  node [disc] (11)  at ( 0, -2) {}
  node [disc] (111) at (-.65, -3) {}
  node [disc] (112) at (0.65, -3) {};
  
  \draw 
  (1) node[right=2pt] {$(2,a)$} -- (11)  node[right=2pt] {$(2,b)$}
  (11)                        -- (111) node[below=1pt] {$(1,c)$\;\;}
  (11)                        -- (112) node[below=1pt] {\;\;$(1,d)$};
\end{tikzpicture}
= \gamma\Big((2,d),
\begin{tikzpicture}[ scale=\scl, baseline=(111.base) ]
  \style 
  \path
  node [disc] (1)   at ( 0, -1) {}
  node [disc] (11)  at ( 0, -2) {}
  node [disc] (111) at ( 0, -3) {};
  
  \draw 
  (1) node[right=2pt] {$(1,a)$} -- (11)  node[right=2pt] {$(1,b)$}
  (11)                        -- (111) node[below=1pt] {$(1,c)$};
\end{tikzpicture}\,
\Big)
= \gamma\Big((2,d),\!
  \begin{tikzpicture}[ scale=\scl, baseline=-2pt ]
    \style
    \node [disc] (r) at (0,1) {};
    \node [disc] (1) at (0,0) {};
    \draw (r) node[above=1pt] 
          {\ensuremath{(1,a)}} 
          -- (1) node[below=1pt] {\ensuremath{(1,b)}};
  \end{tikzpicture}
\!\!\obslash\!\!
  \begin{tikzpicture}[ scale=\scl, baseline=-2pt ]
    \style
    \node [disc] (r) at (0,1) {};
    \node [disc] (1) at (0,0) {};
    \draw (r) node[above=1pt] 
          {\ensuremath{(1,b)}} 
          -- (1) node[below=1pt] {\ensuremath{(1,c)}};
  \end{tikzpicture}
\Big)
$$
The involution $h$ is also easy to extend to \btrees:
\begin{align*}
  h(1,x) &= (1,x); \\
  h\lambda((i,x),t) &= \gamma((i,x),h(t));\\ 
  h(u\oplus v) &= h(v)\obslash h(u). 
\end{align*}
For instance,
$$
\begin{tikzpicture}[ scale=\scl, baseline=(111.base) ]
  \style 
  \node[font=\normalsize] at (-4,-2) {$t\;=$};

  \path
  node [disc] (1)   at (   0, -1) {} 
  node [disc] (11)  at (-0.8, -2) {}
  node [disc] (12)  at ( 0.8, -2) {}
  node [disc] (111) at (-0.8, -3) {}
  node [disc] (121) at ( 0.8, -3) {};
  
  \draw 
  (1) node[above=2pt] {$(2,a)$} -- (11)  node[left =2pt] {$(1,b)$}
  (1)                         -- (12)  node[right=2pt] {$(1,d)$}
  (11)                        -- (111) node[left =2pt] {$(1,c)$}
  (12)                        -- (121) node[right=2pt] {$(1,e)$};

  \draw[->, semithick] (4,-2) -- (5,-2) 
  node[font=\normalsize, midway, yshift=3mm] {$h$};

  \path
  node [disc] (1)   at ( 9.0, -1) {} 
  node [disc] (11)  at ( 8.2, -2) {}
  node [disc] (12)  at ( 9.8, -2) {}
  node [disc] (121) at ( 9.0, -3) {}
  node [disc] (122) at (10.6, -3) {};
  
  \draw 
  (1) node[above=2pt] {$(2,e)$} -- (11)  node[left =2pt] {$(1,d)$}
  (1)                         -- (12)  node[right=2pt] {$(1,c)$}
  (12)                        -- (121) node[left =2pt] {$(1,b)$}
  (12)                        -- (122) node[right=2pt] {$(1,a)$};

  \node[font=\normalsize] at (13.5,-2) {$=\;h(t)$};

\end{tikzpicture}
$$ Let $\NODES(t)$ be the word obtained from preorder traversal of
$t$. Also, denote by $w^r$ the reverse of the word $w$. For instance,
with $t$ as above, we have $\NODES(t)=abcde$ and
$\NODES(t)^r=edcba$.

Let $\LEAVES t$ be the subword of $\NODES(t)$
whose letters are labels of leaves of $t$, and let $\INTERNAL t$ be
the subword of $\NODES(t)^r$ whose letters are labels of internal
nodes of $t$. 

Any labeled \btree\ can be written uniquely as a sum of indecomposable
\btrees. If
$t=\lambda((i_1,x_1),t_1)\oplus\dots\oplus\lambda((i_k,x_k),t_k) $ is
so written, we let $\SUB t
=(\,\NODES(t_1),\dots,\NODES(t_k)\,)$. Similarly, assuming that
$t=\gamma((i_1,x_1),t_1)\obslash\dots\obslash\gamma((i_k,x_k),t_k)$ we
let $\RSUB t = (\,\NODES(t_k)^r,\dots,\NODES(t_1)^r\,)$. 

The definition of the statistic $\ROOT t$ is a bit involved:
$\ROOT t$ is a subword of $\LEAVES t$ of length $k=\root t$. More
precisely, we build this word by starting at the root and greedily and
recursively searching for $k$ leaves in its subtrees starting from the
rightmost subtrees; also, we never search for more leaves in a subtree
than the root label of that subtree. A more precise and formal
definition can be found in the proof of Theorem~\ref{thm_h2}. Let
$\RPATH t$ be the subword of $\NODES(v)^r$ whose letters are labels
of the right path of $t$, excluding the leaf. 

With $t$ and $h(t)$ as in the above picture we have
$$
\begin{array}{lclcl}
\LEAVES t   &=& \INTERNAL h(t) &=& ce       \\  
\INTERNAL t &=& \LEAVES h(t)   &=& dba      \\
\ROOT t     &=& \RPATH h(t)    &=& ce       \\ 
\RPATH t    &=& \ROOT h(t)     &=& da       \\
\SUB t      &=& \RSUB h(t)     &=& (bc,de)  \\ 
\RSUB t     &=& \SUB h(t)      &=& (d,cba). \\  
\end{array}
$$

\begin{theorem}\label{thm_h2}
  On labeled {\btrees} with at least one edge, the involution $h$ sends
  the first tuple below to the second.
  $$
  \begin{array}{lllllllll}
    ( &\LEAVES,    &\INTERNAL, &\ROOT,  &\RPATH, &\SUB,  &\RSUB & ) \\
    ( & \INTERNAL, &\LEAVES,   &\RPATH, &\ROOT,  &\RSUB, &\SUB  & )
  \end{array}
  $$
\end{theorem}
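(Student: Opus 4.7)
The plan is to mirror the induction used for Theorem~\ref{thm_h}, now carrying along label subwords rather than just cardinalities. Because $h^2=\mathrm{id}$, applying any proven equality to $h(t)$ yields its partner, so only three of the six identities need direct verification; natural choices are
\[
\LEAVES t=\INTERNAL h(t),\quad \SUB t=\RSUB h(t),\quad \ROOT t=\RPATH h(t).
\]

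I would first establish a ``backbone'' identity $\NODES(h(t))=\NODES(t)^r$ by a three-case induction paralleling Lemma~\ref{h-dual}. The base case is immediate. For indecomposable $t=\lambda((i,x),u)$, the identity $h(t)=\gamma((i,x),h(u))$ converts the prepending of $x$ to the preorder (by $\lambda$) into the appending of $x$ to the preorder (by $\gamma$), and these swap under reversal. For decomposable $t=u\oplus v$, the identity $h(t)=h(v)\obslash h(u)$ does the analogous swap of concatenation orders. With the backbone in place, $\LEAVES t=\INTERNAL h(t)$ follows quickly: $h$ interchanges the two types of label-introducing operations ($\lambda$ adds a new internal node, $\gamma$ adds a new leaf), and the backbone handles the preorder/reverse-preorder conversion.

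The identity $\SUB t=\RSUB h(t)$ is also short. Iterating the third clause of the definition of $h$ on the unique sum decomposition $t=\lambda((i_1,x_1),t_1)\oplus\cdots\oplus\lambda((i_k,x_k),t_k)$ yields
\[
h(t)=\gamma((i_k,x_k),h(t_k))\obslash\cdots\obslash\gamma((i_1,x_1),h(t_1)),
\]
which is precisely the $\obslash$-decomposition of $h(t)$ used to define $\RSUB$. The backbone then supplies $\NODES(h(t_j))^r=\NODES(t_j)$ for each $j$, so the two tuples agree with their entries in reverse order, as required.

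The main obstacle is $\ROOT t=\RPATH h(t)$, because $\ROOT$ is defined only informally as a greedy rightmost-first leaf search. To make it precise I would introduce an auxiliary family $\ROOT_m(t)$ extracting $m$ leaves (with $0\leq m\leq\root t$), governed by
\begin{align*}
\ROOT_0(t) &= \emptyword,\\
\ROOT_1((1,x)) &= x,\\
\ROOT_m(\lambda((i,x),u)) &= \ROOT_m(u),\\
\ROOT_m(u\oplus v) &= \ROOT_{m-j}(u)\cdot\ROOT_j(v),\quad j=\min(m,\root v),
\end{align*}
with $\ROOT t=\ROOT_{\root t}(t)$. The stronger induction hypothesis to run is that $\ROOT_m(t)$ equals the length-$m$ suffix of $\RPATH h(t)$, for every $0\leq m\leq\root t$. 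On the right, $\RPATH$ satisfies matching recurrences: $\RPATH\gamma((i,x),s)$ is the length-$i$ suffix of $\RPATH s$ (directly from the definition of $\gamma$), and $\RPATH(A\obslash B)=\RPATH B\cdot\RPATH A$ (from \eqref{rpath_obslash}). Combined with Lemma~\ref{h-dual}, these give clean recurrences on $\RPATH h(t)$; case analysis on whether $m\leq\root v$ in the $\oplus$-step, plus the identity $\rpath h(t)=\root t$ from Theorem~\ref{thm_h} to equate $|\RPATH h(v)|$ with $\root v$, closes the induction.
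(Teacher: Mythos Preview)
Your plan is correct and would work, but it takes a genuinely different route from the paper.

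The paper's proof is much more uniform: it simply writes down, for each of the six statistics, a recursive characterization in terms of one of the two decompositions---$\LEAVES$, $\ROOT$, $\SUB$ via $(\lambda,\oplus)$ and $\INTERNAL$, $\RPATH$, $\RSUB$ via $(\gamma,\obslash)$---and observes that the six recursions fall into three matching pairs under the substitution $\lambda\leftrightarrow\gamma$, $\oplus\leftrightarrow\obslash$ (with concatenation order reversed). Since $h$ is defined by exactly that substitution (and Lemma~\ref{h-dual} gives the dual description), a single induction handles all three pairs at once. In particular, the paper never needs your backbone identity $\NODES(h(t))=\NODES(t)^r$, nor an auxiliary family $\ROOT_m$: it just posits $\ROOT\lambda((i,x),t)=\take_i(\ROOT t)$ as the \emph{definition} of $\ROOT$ and matches it against $\RPATH\gamma((i,x),t)=\take_i(\RPATH t)$.

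What your approach buys is an explicit, independently interesting lemma (the preorder-reversal backbone), and it forces you to confront the suffix-versus-prefix issue in the $\ROOT$/$\RPATH$ pair. Indeed, your observation that $\RPATH\gamma((i,x),s)$ is the length-$i$ \emph{suffix} of $\RPATH s$ is correct, and your $\ROOT_m$ recursion (greedy from the rightmost subtree) matches the paper's informal description of $\ROOT$. The paper's displayed formula uses $\take_i$, defined there as the length-$i$ \emph{prefix}, which does not literally agree with either the informal description of $\ROOT$ or the actual behaviour of $\RPATH$ under $\gamma$; your treatment silently repairs this. The paper's approach is shorter and treats all three pairs symmetrically; yours is more explicit about what $\ROOT$ really is and yields the backbone as a bonus.
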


\begin{proof} 
  In terms of the recursive decomposition of labeled \btrees, we have
  $$
  \begin{array}{lcl}
  \LEAVES\ (1,x)            &=& x \\
  \LEAVES \lambda((i,x),t)  &=& \LEAVES t \\
  \LEAVES (u\oplus v)       &=& \LEAVES u \,\sqcup\, \LEAVES v \\[1.3ex]
  \INTERNAL\ (1,x)          &=& x \\
  \INTERNAL \gamma((i,x),t) &=& \INTERNAL t \\
  \INTERNAL (u \obslash v)  &=& \INTERNAL v \,\sqcup\, \INTERNAL u\\[1.3ex]
  \ROOT\ (1,x)              &=& x \\
  \ROOT \lambda((i,x),t)    &=& \take_i(\ROOT t) \\ 
  \ROOT (u\oplus v)         &=& \ROOT u \,\sqcup\, \ROOT v \\[1.3ex]
  \RPATH\ (1,x)             &=& x \\
  \RPATH \gamma((i,x),t)    &=& \take_i(\RPATH t) \\
  \RPATH (u \obslash v)     &=& \RPATH v \,\sqcup\, \RPATH u \\[1.3ex]
  \SUB\ (1,x)               &=& \emptyword \\
  \SUB \lambda((i,x),t)     &=& (\NODES(t))   \\
  \SUB (u \oplus v)         &=& \SUB u \,\sqcup\, \SUB v \\[1.3ex]
  \RSUB\ (1,x)              &=& \emptyword \\
  \RSUB \gamma((i,x),t)     &=& (\NODES(t)^r)   \\
  \RSUB (u \obslash v)      &=& \RSUB v \,\sqcup\, \RSUB u 
  \end{array}
  $$ 
  where $u\/\sqcup\/v$ denotes the concatenation of $u$ and $v$, and
  $\take_i(a_1\dots a_n) = a_1\dots a_i$.  Using induction and
  the definition of $h$ the result readily follows.
\end{proof}

\bibliographystyle{plain}

\end{document}